\newtheorem*{theorem}{Theorem}
\newtheorem{prop}{Proposition}
\begin{document}

\title{Separating Solution of a Quadratic Recurrent Equation}

\author{Ya.\, G.\, Sinai\footnote{Mathematics
Department, Princeton University and Landau Institute of Theoretical Physics, Russian Academy of Sciences} \and I.\, Vinogradov\footnote{Mathematics
Department, Princeton University}}

\date{\today}
\maketitle

\bigskip


\hfill\hbox to 7cm{\vbox{\noindent To J.\, Froehlich and T. Spencer \\with love and admiration.}}

\bigskip


\begin{abstract}
In this paper we consider the recurrent equation $$\Lambda_{p+1}=\frac1p\sum_{q=1}^pf\bigg(\frac{q}{p+1}\bigg)\Lambda_{q}\Lambda_{p+1-q}$$ for $p\ge 1$ with $f\in C[0,1]$ and $\Lambda_1=y>0$ given. We give conditions on $f$ that guarantee the existence of $y^{(0)}$ such that the sequence $\Lambda_p$ with $\Lambda_1=y^{(0)}$ tends to a finite positive limit as $p\to \infty$.
\end{abstract}

\section{Introduction}

The following problem arose in the joint papers of the first author and Dong Li (see \cite{LS1} and \cite{LS2}). Let $f$ be a continuous real-valued function on $[0,1]$. Define the sequence $\Lambda_p$ for $p=1,2,\dots$ by 
\beq\label{eq:basic}\Lambda_{p+1}=\frac1p\sum_{q=1}^pf\bigg(\frac{q}{p+1}\bigg)\Lambda_{q}\Lambda_{p+1-q}\eeq
and set $\Lambda_1=y\ge0$. We shall occasionally write $\Lambda_p(y)$ to emphasize the dependence of $\Lambda_p$ on the initial value $y$. It is clear that $\Lambda_p(cy)=c^p\Lambda_p(y).$ Therefore if $\Lambda_p(y)\to\infty$ as $p\to\infty$ and $c>1$, then $\Lambda_p(y')\to\infty$ as $p\to\infty$ where $y'=cy$. On the other hand if $\Lambda_p(y)\to0$ and $0<c<1$, then $\Lambda_p(y')\to0$. Thus there exist $y^+$ and $y^-$ such that $\Lambda_p(y)\to\infty$ for $y\in(y^+,\infty)$ with $y^+$ as small as possible and $\Lambda_p(y)\to0$ for $y\in(0,y^-)$ with $y^-$ as large as possible. It is a natural question whether $y^+=y^-=y^{(0)}$ and whether $\Lambda_p(y^{(0)})\to\const$ as $p\to\infty$. It is easy to see that this constant must be $\bigg(\displaystyle\int_0^1f(x)dx\bigg)^{-1}$, and it is our first assumption that the last integral is positive. It is enough to consider the case $\displaystyle\int_0^1f(x)dx=1$ because if $\tilde f(x)=Kf(x)$ for a constant $K$, then $\tilde\Lambda_p(y)=K^{-1}\Lambda_p(y).$ If the answer to our question is affirmative then $\Lambda_p(y^{(0)})$ is called the separating solution of $\eqref{eq:basic}$.

This problem was considered previously in \cite{L1} and \cite{S1}. The analysis in \cite{L1} covered the case $f(x)=6x^2-10x+4$ needed in \cite{LS1}. The analysis in \cite{S1} was based on a different idea but unfortunately had a number of gaps. This paper is a modified and corrected version of \cite{S1}. 

Before we give the assumptions we impose on $f$, we remark that $f(x)$ and $f(1-x)$ produce identical sequences. Therefore the existence of a separating solution depends only on $f_1(x)=f(x)+f(1-x).$ Of course establishing existence of a solution for $f$ guarantees its existence for $g$ if $g_1=f_1$. Given $f_1(x)$ one can find $f(x)$ so that $f(1)=0$. Thus  we assume that $f(1)=0$ without loss of generality. Now we impose the following conditions on $f$: 
\begin{enumerate}
\item $f\in C^2[0,1],$
\item $f_1$ is positive on $[0,1]\cap\mathbf Q$,
\item all complex $\sigma\ne 1$ satisfying $\displaystyle\int_0^1t^\sigma f_1(t)dt=1$ have the property that $\re \sigma<0$,
\item a numerical condition to be explained later. 
\end{enumerate}
Observe that an assumption similar to 2 is necessary as $\Lambda_p$ will vanish for $p$ sufficiently large if $f_1$ vanishes on too large a set (e.g., if $f_1(\frac12)=0$); Assumption 2 effectively ensures that $\Lambda_p>0$ for all $p$. Finally we introduce functions $f_2(x)=-(x f(x))'$ and $f_3(x)=-\displaystyle\frac1{x^2}\displaystyle\int_0^x tf_2(t)dt$.

Define $a_p>0$ for $p\ge1$ by the condition $\Lambda_p(a_p)=1$; Assumption 2 above makes this possible. The strategy of the proof will be to show that $a_p\to a_\infty$ sufficiently rapidly. Take positive constants $A$ and $B$ with $B<1<A$ and consider the inequalities \begin{align}\label{eq:inductive}B\le  a_p && |a_{p}-a_{p-1}|\le A/p^{2+\delta};\end{align} where $p$ is given and $\delta\in(0,\frac12)$ will be chosen later and will depend on $f_1$. 

\begin{theorem}[Main Theorem]Let $f$ satisfy assumptions 1--3 above. If for some $p_0$ (depending on $A$, $B$, and $f_1$) the inequalities \eqref{eq:inductive} hold for $p\le p_0$, then they are valid for all $p\ge1$. 
\end{theorem}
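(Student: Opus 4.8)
The plan is to prove the statement by induction: assume the inequalities hold for all indices up to some $p$ (with $p \ge p_0$) and derive them for $p+1$. The key is to understand how $a_{p+1}$ is determined. Since $\Lambda_{p+1}(a_{p+1}) = 1$ and the recurrence \eqref{eq:basic} expresses $\Lambda_{p+1}$ as a quadratic form in the lower-order terms $\Lambda_q \Lambda_{p+1-q}$, I would first substitute $\Lambda_q(a_q) = 1$ where possible and track the discrepancy caused by using a common value of the initial data rather than the individual $a_q$. Concretely, writing the recurrence at the putative value $a_{p+1}$ and using the homogeneity $\Lambda_q(cy) = c^q \Lambda_q(y)$, I expect to obtain an equation of the form
\beq
1 = \frac{1}{p}\sum_{q=1}^p f\!\left(\frac{q}{p+1}\right)\left(\frac{a_{p+1}}{a_q}\right)^q\left(\frac{a_{p+1}}{a_{p+1-q}}\right)^{p+1-q},
\eeq
which, after setting $a_{p+1} = a_p(1 + \eta)$ for a small correction $\eta$, should be linearized in the small quantities $a_{p+1}/a_q - 1$.

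Next I would extract the leading behavior of the sum. The natural step is to replace the discrete sum $\frac{1}{p}\sum_q f(q/(p+1))(\cdots)$ by a Riemann-type integral and identify the main term with $\int_0^1 f_1(x)\,dx$, which by normalization equals $1$. The auxiliary functions $f_2(x) = -(xf(x))'$ and $f_3(x) = -\frac{1}{x^2}\int_0^x t f_2(t)\,dt$ introduced in the excerpt are surely the tools for this: integration by parts converts the weighted sums that arise from the factors $(a_{p+1}/a_q)^q$ (whose logarithms are approximately $q \cdot (\text{small})$, so they reweight the sum by a factor depending smoothly on $q/p$) into integrals against $f_2$ and $f_3$. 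Here Assumption 3, that every root $\sigma \neq 1$ of $\int_0^1 t^\sigma f_1(t)\,dt = 1$ has $\re\sigma < 0$, is what guarantees the linearized recursion for the corrections is contracting rather than expanding; this spectral condition should manifest as strict negativity of the relevant multiplier, giving geometric (or at least summable) decay of the perturbations.

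The heart of the argument is then a quantitative estimate: I would show that $|a_{p+1} - a_p|$ is bounded by the sum of a main contribution of size $O(1/p^{2+\delta})$, coming from the $C^2$ smoothness of $f$ (Taylor expansion of $f(q/(p+1))$ and the Euler--Maclaurin error in the Riemann sum, which for a $C^2$ integrand yields $O(1/p^2)$ corrections), plus a contribution from the accumulated earlier errors $\sum_{q<p}|a_q - a_{q-1}|$. Using the inductive hypothesis $|a_q - a_{q-1}| \le A/q^{2+\delta}$ and summing, the tails $\sum_{q \ge p/2} A/q^{2+\delta}$ are of order $A/p^{1+\delta}$, and combined with the contraction from Assumption 3 these should close the bound $|a_{p+1} - a_p| \le A/(p+1)^{2+\delta}$ provided $A$ is chosen large enough, $\delta$ small enough, and $p_0$ large enough. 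Simultaneously the lower bound $a_{p+1} \ge B$ follows because the correction $\eta$ is small, so $a_{p+1}$ stays within a fixed factor of $a_p \ge B$; one must check that the cumulative multiplicative drift $\prod(1+\eta_q)$ stays bounded away from $0$, which again reduces to summability of the $\eta_q$.

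The main obstacle I anticipate is the careful bookkeeping in the linearization step — specifically, controlling the quadratic and higher-order terms in the corrections $a_{p+1}/a_q - 1$ uniformly in $q$, since when $q$ is close to $p$ the exponent $p+1-q$ is small but when $q$ is of order $p/2$ both exponents are large, so the factors $(a_{p+1}/a_q)^q$ are genuinely of order one and cannot be naively Taylor-expanded. I expect the decomposition of the sum into a central régime (where $q$ and $p+1-q$ are both comparable to $p$) and boundary régimes (where one index is small) to be essential, with the functions $f_2$ and $f_3$ handling precisely the transition. The delicate point is ensuring that the constant produced by Assumption 3's spectral gap beats the constant $A$ in the inductive hypothesis after the sums are assembled; this is presumably where Assumption 4, the unspecified numerical condition, enters to make the bound genuinely self-improving.
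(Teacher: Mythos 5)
Your outline of the reduction step is broadly faithful to the paper's Section 2 --- induction on $p$, exploiting homogeneity $\Lambda_q(cy)=c^q\Lambda_q(y)$, linearizing in the quantities $\frac{a_p-a_{p_1}}{a_{p_1}}$, and using the $C^2$ smoothness of $f$ together with $f_2$ and $f_3$ to convert sums to integrals with $O(1/p^2)$-type errors. But there is a genuine gap at the heart of your argument: you assert that Assumption 3 ``should manifest as strict negativity of the relevant multiplier, giving geometric (or at least summable) decay of the perturbations,'' i.e.\ that the linearized recursion is a contraction. It is not. The linearized equation is the convolution-type relation \eqref{eq:finalrecurrent}, $b_{p+1}=\frac1p\sum_{q=2}^p b_q f_3(q/p)+\eps_p^{(12)}$ with $b_p=p^2\frac{a_p-a_{p-1}}{a_{p-1}}$, and the identity $F_3(\sigma)=\frac{\sigma}{\sigma-1}F_1(\sigma)-\frac1{\sigma-1}$ gives $F_3(0)=1$ \emph{identically}, regardless of $f$. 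So the linearized operator always has a neutral mode: constant sequences $b_p\equiv C$ solve the homogeneous equation asymptotically, and no choice of $A$, $\delta$, or $p_0$ in a majorant induction can make such a mode decay. Assumption 3 gives $\re\sigma<0$ only for the \emph{other} roots; the neutral mode $\sigma=0$ must be excluded by a separate, non-spectral argument, which the paper supplies: a constant $b_p\sim C$ would force $\bigl(1-\frac{a_\infty-a_p}{a_p}\bigr)^p\to e^{-C}$, contradicting the normalization $\Lambda_p(a_p)=1$ unless $C=0$. Your proposal contains no substitute for this step, and without it the induction cannot close.

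A second, related defect is the method for solving the linearized equation. Because the homogeneous solutions behave like $p^\sigma$ with $\sigma\in\Sigma$ possibly complex (e.g.\ $\Sigma'\approx\{-0.234\pm2.116i\}$ for $f(x)=9x^8$), the corrections decay only at a power-law rate $p^{\re\sigma}$ with oscillation --- never geometrically --- so a single-step contraction estimate cannot capture them. The paper instead passes to the continuous equation \eqref{eq:continuous} and classifies \emph{all} solutions via the Fourier (equivalently Mellin) transform: every solution is a combination of $x^\sigma\log^j x$ over $\sigma\in\Sigma$ plus a special solution decaying like $x^{-2\delta}$, and a separate compactness statement (Proposition 2, finiteness of $\Sigma$ in half-planes $\re\sigma>\sigma_0>-1$) is needed before one can even define $\sigma_1=\max\{\re\sigma:\sigma\in\Sigma'\}$ and choose $\delta$ with $\sigma_1<-\delta<0$. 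These two ingredients --- the transform-based classification and the exclusion of the extraneous $\sigma=0$ mode --- are the actual content of the proof, and both are missing from your proposal. (Minor point: the Main Theorem uses only Assumptions 1--3; Assumption 4 does not play the role you assign it of making the bound self-improving --- that is accomplished by the choice of $\delta$ relative to $\sigma_1$.)
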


Our proof will be inductive. We shall assume \eqref{eq:inductive} for $p\le r$ and prove it for $p=r+1$. This will imply that the limit $\displaystyle\lim_{p\to\infty}a_p=a_\infty$ exists and $\Lambda_p(a_\infty)$ will be the desired separating solution. 

The rest of the paper is structured as follows. In Section 2 we derive a recurrent equation for $a_p$. In Section 3 we solve this equation using the inductive hypothesis. The last Section consists  of numerical analysis and outlines further research on the problem. 

The first author thanks NSF for the financial support, grant DMS N 0600996.


\section{Recurrent Equation for $a_p$}

We shall denote absolute constants by $C$ with superscipts in the course of this calculation. We have that  \beq\label{eq:beforecalculation}\Lambda_{p+1}(a_{p+1})-\Lambda_{p+1}(a_p)=-(\Lambda_{p+1}(a_p)-\Lambda_p(a_p)).\eeq

Put $\gamma =
\frac{p_1}{p}$, $p_2 = p - p_1$, $\gamma' =  \frac{p_1}{p + 1}$.
Then
\begin{align*} \Lambda_{p + 1} ( a_p) & =  \frac{1}{p}  \sum\limits_{p_1 =
1}^{p} f ( \gamma')  \Lambda_{p_1} ( a_p)  \Lambda_{p_2 +1}
( a_p )  
 =  \frac{1}{p}   \sum\limits_{p_1 = 1}^{p}  f ( \gamma' )
( \Lambda_{p_1} ( a_p) - 1) ( \Lambda_{p_2+1} ( a_p ) - 1)  + 
\\
& +  \frac{1}{p}   \sum\limits_{p_1 = 1}^{p}  f ( \gamma') (
\Lambda_{p_1} ( a_p ) - 1)  +  \frac{1}{p}  \sum\limits_{p_1 =
1}^{p}  f ( \gamma') ( \Lambda_{p_2+1} (a_p) - 1) 
 -  \frac{1}{p}  \sum\limits_{p_1 = 1}^{p}  f ( \gamma')
 =
\\
& =  \frac{1}{p}  \sum\limits_{p_1 = 1}^{p}  f ( \gamma') (
\Lambda_{p_1} ( a_p) - 1) ( \Lambda_{p_2+1} (a_p) - 1)  +  
\frac{1}{p}  \sum\limits_{p_1 = 1}^{p}  f_1 ( \gamma' ) (
\Lambda_{p_1} ( a_p) - 1) - \frac{1}{p}  \sum\limits_{p_1 = 1}^{p}  f
( \gamma')  .
\end{align*}
A similar formula can be written for $\Lambda_p ( a_p)$:
\begin{align*} \Lambda_p ( a_p ) & =  \frac{1}{p-1}  
\sum\limits_{p_1 = 1}^{p-1}  f ( \gamma)  ( \Lambda_{p_1} ( a_p) - 1)
( \Lambda_{p_2} ( a_p) - 1)  + 
\\
&+  \frac{1}{p-1}  \sum\limits_{p_1 = 1}^{p-1}  f_1 ( \gamma ) (
\Lambda_{p_1} ( a_p) - 1)  -  \frac{1}{p-1}  \sum\limits_{p_1 =
1}^{p-1}  f ( \gamma )  .
\end{align*}
Subtracting $\Lambda_p ( a_p)$ from $\Lambda_{p+1} ( a_p)$ we get 
\begin{align*} \Lambda_{p+1} ( a_p) - \Lambda_p ( a_p ) & =  
\frac{1}{p}  f \left( \frac{p}{p+1} \right) ( \Lambda_p ( a_p ) - 1) (
\Lambda_1 (a_p ) - 1) 
\\
& +  \sum\limits_{p_1 = 1}^{p-1}  \left( \frac{1}{p}  f (
\gamma' ) - \frac{1}{p-1}  f ( \gamma ) \right) ( \Lambda_{p_1} (
a_p ) - 1) ( \Lambda_{p_2} ( a_p ) - 1 ) 
\\
&+  \frac{1}{p}  \sum\limits_{p_1 = 1}^{p-1}  f ( \gamma')
( \Lambda_{p_1} ( a_p) - 1) ( \Lambda_{p_2 + 1} (a_p ) -
\Lambda_{p_2} (a_p)) 
\\
 &+  \frac{1}{p}  f_1 \left( \frac{p}{p+1} \right) ( \Lambda_{p-1}
(a_p) - 1)  +  
\\
&+\sum\limits_{p_1 = 1}^{p-1}  \left( \frac{1}{p}   f_1 (
\gamma' ) -  \frac{1}{p-1}  f_1 ( \gamma ) \right)( \Lambda_{p_1} ( a_p) - 1)  +  \\
&+
\frac{1}{p}  
f \left( 
\frac{p}{p+1} \right ) -  
\sum\limits_{p_1 = 1}^{p-1}  \left(
\frac{1}{p}  f ( \gamma' )  -  \frac{1}{p-1}  f ( \gamma )
\right)  =  
\sum\limits_{j = 1}^{7}  I_p^{(j)}  .
\end{align*}

We estimate $I_p^{(j)}$. It will be shown that $I_p^{(5)}$ is the main term while the others have a smaller order of magnitude. This term produces the recurrent equation that we shall analyze in Section \ref{sec:analysis}

It is readily seen that $I_p^{(4)}=\eps_p^{(1)}$, where $|\eps_p^{(1)}|\le\frac{C^{(1)}A}{Bp^{2+\delta}}.$ The reasoning is as follows. Rewrite the term as $$\frac1p\bigg(f_1(1)-f'(\xi)\frac1{p+1}\bigg)\bigg(\Lambda_{p-1}(a_{p-1})\bigg(\frac{a_p}{a_{p-1}}\bigg)^{p-1}-1\bigg).$$ It is clear how to bound the second term in the first factor. The second factor can be written as $$\sum_{k=1}^{p-1}\bigg(\frac{a_p-a_{p-1}}{a_{p-1}}\bigg)^k\binom{p-1}{k}$$ whence it is easy to see that it is bounded by $\const \cdot\frac{A}{Bp^{1+\delta}}.$ The estimate for the fourth term follows. 

We go on to $$I_p^{(5)}=\sum_{p_1=1}^{p-1}\bigg(\frac1pf_1(\gamma')-\frac1{p-1}f_1(\gamma)\bigg)(\Lambda_{p_1}(a_p)-1).$$ For the first factor in the sum we get $$\frac{f_2(\gamma')}{p(p-1)}+\eps_p^{(2)}$$ where $|\eps_p^{(2)}|\le\frac{C^{(2)}}{p^3}.$ The second factor is more complicated and we first rewrite it as
$$\frac{a_p-a_{p_1}}{a_{p_1}}p_1-\sum_{k=2}^{p_1}\bigg(\frac{a_p-a_{p_1}}{a_{p_1}}\bigg)^k\binom{p_1}{k}.$$ 
The last term of this expression is not more than $\frac{C^{(3)}}{p_1^{2\delta}}\left(\frac AB\right)^2.$ Multiplying out gives the following expression: 
$$I_p^{(5)}=\sum_{p_1=1}^p\frac{\gamma f_2(\gamma')}{p-1}\frac{a_p-a_{p_1}}{a_{p_1}}+\eps_p^{(3)}$$ and $|\eps_p^{(3)}|\le C^{(4)}\frac1{p^{1+2\delta}}\left(\frac AB\right)^2.$ 

Now we deal with three relatively simple terms. Let us begin with the seventh one: 
\begin{align*}
I_p^{(7)} & = - \sum\limits_{p_1 = 1}^{p - 2}  
\left( \frac{1}{p}   f ( \gamma^\prime)   - 
\frac{1}{p -1}   f ( \gamma) \right)   =  
\\
& = -   \sum\limits_{p_1 = 1}^{p - 2}   
\left[
\left(
\frac{1}{p}   f \left( \frac{p_1}{p + 1} \right)   - \frac{1}{p - 1}
f \left( \frac{p_1}{p+1} \right) \right)   +   
\frac{1}{p - 1}   
\left( f \left( \frac{p_1}{p + 1} \right)   -   f 
\left( \frac{p_1}{p} \right) \right) \right] =
\\
& =    \frac{1}{p ( p - 1 )}  
\sum\limits_{p_1 = 1}^{p - 2}   \left[
f   \left( \frac{p_1}{p + 1} \right)   +  
\frac{p_1}{(p + 1 )}   f^\prime \left( \frac{p_1}{p + 1} \right )
\right]   +   \epsilon_{p}^{(4)} =
\\
& = \frac{p + 1}{p ( p-1)} 
\int\limits_{0}^{1}   [ f ( \gamma )   +   \gamma   f^\prime (
\gamma ) ]   d \gamma   +   \epsilon_p^{(5)}.\end{align*}
Our assumption that $f(1)=0$ implies that the last integral vanishes. Thus, $I_p^{(7)}=\eps_p^{(5)}$ and $|\eps_p^{(5)}|\le \frac{C^{(5)}}{p^2}.$ 

It is easy to see that $I_p^{(1)}=0$ and $|I_p^{(6)}|\le \frac{C^{(6)}}{p^2}.$

To estimate $I_p^{(2)}$ we rewrite it as \begin{gather*}I_p^{(2)}=\sum_{p_1=1}^p\left(-\frac{\gamma'f'(\gamma')+f(\gamma')}{p(p-1)}+\eps_p^{(6)}\right)
\left[\sum_{k=1}^{p_1}\left(\frac{a_p-a_{p_1}}{a_{p_1}}\right)^k\binom{p_1}{k}\right]
\left[\sum_{k=1}^{p_2}\left(\frac{a_p-a_{p_2}}{a_{p_2}}\right)^k\binom{p_2}{k}\right]\end{gather*} The terms in the brackets are bounded by $C^{(7)}\frac{A}{Bp_1^\delta}$ and $C^{(8)}\frac{A}{Bp_2^\delta}.$ Thus the estimate for this term becomes $C^{(9)}\frac{A}{Bp^{1+2\delta}}.$ 

Finally for the third term we need to estimate $$\Lambda_{p_2+1}(a_p)-\Lambda_{p_2}(a_p).$$ It is not difficult to see that $|\Lambda_{p_2+1}(a_p)-\Lambda_{p_2}(a_p)|\le\frac{C^{(10)}}{p_2^{1+\delta}}\left(\frac AB\right)^3.$ Combining this with the remaining factors gives the bound $C^{(11)}\frac 1{p^{1+2\delta}}\left(\frac AB\right)^5$ for $I_p^{(3)}$. We have used the fact that $f(x)\le C(1-x)$ in the last step. 

Now we can put the seven terms together and see that 
\beq \notag\label{eq:seventermsdone}\Lambda_{p+1}(a_{p+1})-\Lambda_{p+1}(a_{p})=-\sum_{p_1=1}^p \frac{\gamma f_2(\gamma')}{p-1}\frac{a_p-a_{p_1}}{a_{p_1}}+\eps_p^{(7)}\eeq where $|\eps_p^{(7)}|\le \frac{C^{(12)}}{p^{1+2\delta}} \left( \frac AB\right)^5. $ A simple calculation gives the recurrent equation 
\beq \label{eq:recurrentequation}(p+1)\frac{a_{p+1}-a_p}{a_p}=-\sum_{p_1=1}^p \frac{\gamma f_2(\gamma')}{p-1}\frac{a_p-a_{p_1}}{a_{p_1}}+\eps_p^{(8)}\eeq
with $|\eps_p^{(8)}|\le  \frac{C^{(13)}}{p^{1+2\delta}} \left( \frac AB\right)^5. $


Our objective in this section is to derive a recurrent equation for $b_p=p^{2}\frac{a_p-a_{p-1}}{a_{p-1}}$. Thus we rewrite \eqref{eq:recurrentequation} using $b_p$ rather than $a_p$. We take a positive integer $Q\le p$ and  get $$b_{p+1}=-p\left[\sum_{p_1=1}^Q+\sum_{p_1=Q+1}^p\right]\frac{\gamma f_2(\gamma')}{p-1}\frac{a_p-a_{p_1}}{a_{p_1}}+\eps_p^{(9)}.$$ Now the sum from 1 to $Q$ gives a contribution bounded by $\frac{C^{(14)}}{p^2}\frac AB  Q^{1-\delta}.$ For the sum from $Q+1$ to $p$ we observe that $$\prod_{q=p_1+1}^p\left(1+\frac{b_{q}}{p^{2}}\right)-1=\frac{a_p-a_{p_1}}{a_{p_1}}.$$ The left hand side can be written as $$\sum_{q=p_1+1}^p\frac{b_q}{q^{2}}+\eps_{p_1}^{(10)}$$ with $|\eps_{p_1}^{(10)}|\le \frac{C^{(15)}}{p_1^3}\left(\frac AB\right)^2$ provided $Q$ is chosen sufficiently large and independent of $p$. Using this fact we simplify our equation to 
\beq\label{eq:almostfinalrecurrent}b_{p+1}=-p\sum_{p_1=1}^p\frac{\gamma f_2(\gamma')}{p-1}\sum_{q=p_1+1}^p\frac{b_q}{q^{2}}+\eps_p^{(11)},\eeq with $|\eps_p^{(11)}|\le \frac{C^{(16)}}{ p^{2\delta}}\left(\frac AB\right)^5.$ After changing the order of summation we obtain the equation 
\beq\label{eq:finalrecurrent}b_{p+1}=\frac1{p}\sum_{q=2}^pb_qf_3\left(\frac qp\right)+\eps_p^{(12)}\eeq with $|\eps_p^{(12)}|\le \frac{C^{(17)}}{p^{2\delta}}\left(\frac AB\right)^5.$
This is the equation we set out to solve; it is effectively a linearized version of the original equation. 

\section{Analysis of the Recurrent Equation}

\label{sec:analysis}

It will be more advantageous to have a continuous equation rather than a discrete one. To this effect we need to define $b(x)$ that would agree with $b_p$ when $x=p$. First we observe that \eqref{eq:finalrecurrent} can be written as $$b_{p}=\frac1{p}\sum_{q=2}^pb_qf_3\left(\frac qp\right)+\eps_p^{(13)}$$ with a different constant in the estimate for the error term. Now we can extend $b$ as follows: set $b(x)=b_{\lfloor x\rfloor}$ with $b(x)=0$ on $[0,1)$. Then we have $$\int_0^1 b(py)f_3(y)dy=\frac1p\sum_{q=2}^pb_qf_3\left(\frac{q'}{p}\right)$$ with $q'\in[q,q+1]$. It is easy to see that this sum differs from the one in the recurrent equation by not more than $\frac{C}{p^2}\sum_{q=1}^pb_q.$ The new error term $\eps(x)$ will incorporate this term as well as $\eps^{(13)}$. It is also clear that we need to add lower order corrections to $\eps(x)$ to ensure that $b(x)$ remains constant for non-integral $x$. 

The equation to solve is now \beq b(x)=\int_0^1 b(xy)f_3(y) dy+\eps(x).\label{eq:continuous}\eeq  The error term is $|\eps(x)|\le \frac{C^{(18)}}{x^{2\delta}}$ (we are dropping the dependence on $A$ and $B$ for now). 

\begin{prop}
Let $f_3$ be given as before and let $$\Sigma=\left\{\sigma\in\mathbf C\colon\int_0^1 t^\sigma f_3(t)dt=1\right\}.$$ Then all $b(x)$ satisfying \eqref{eq:continuous} with $\eps(x)$ as above are (possibly infinite) linear combinations of elements of $\bigcup_{\sigma\in\Sigma}\{x^\sigma, x^\sigma\log x, \dots, x^\sigma\log^{k-1}x\}\cup\{b_\eps(x)\}$ where $k=k(\sigma)$ denotes the multiplicity of $\sigma$, and the special solution $b_\eps(x)$ has the property $|b_\eps(x)|\le \frac{C^{(19)}}{x^{2\delta}}.$ 
\end{prop}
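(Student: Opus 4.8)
The plan is to treat \eqref{eq:continuous} as a multiplicative convolution and diagonalize it by the Mellin transform. Writing $(Tb)(x)=\int_0^1 b(xy)f_3(y)\,dy$, the substitution $x'=xy$ shows that $T$ acts on powers by $T(x^\sigma)=\mu(\sigma)\,x^\sigma$, where $\mu(\sigma)=\int_0^1 t^\sigma f_3(t)\,dt$; in particular $\Sigma$ is exactly the zero set of $1-\mu$. With the transform $\tilde b(\sigma)=\int_0^\infty b(x)\,x^{-\sigma-1}\,dx$ the equation becomes the algebraic identity $(1-\mu(\sigma))\,\tilde b(\sigma)=\tilde\eps(\sigma)$, so that formally $\tilde b(\sigma)=\tilde\eps(\sigma)/(1-\mu(\sigma))$ and $b$ is recovered by inverse transform along a vertical line. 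Equivalently, the substitution $x=e^u$ turns \eqref{eq:continuous} into a one-sided convolution on the half-line whose characteristic function is $\mu$, i.e. a renewal/Wiener--Hopf equation; this is the form I would actually estimate.

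First I would verify the homogeneous solutions by direct substitution. Expanding $\log^j(xy)=(\log x+\log y)^j$ gives $T(x^\sigma\log^j x)=x^\sigma\sum_{i=0}^j\binom{j}{i}\mu^{(j-i)}(\sigma)\log^i x$, where $\mu^{(m)}(\sigma)=\int_0^1 t^\sigma(\log t)^m f_3(t)\,dt$ is the $m$-th derivative of $\mu$. If $\sigma\in\Sigma$ has multiplicity $k$ then $\mu(\sigma)=1$ and $\mu'(\sigma)=\dots=\mu^{(k-1)}(\sigma)=0$, so for $0\le j\le k-1$ all terms with $i<j$ vanish and $T(x^\sigma\log^j x)=x^\sigma\log^j x$; thus each such function solves the homogeneous equation. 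For the converse I would argue that any homogeneous solution $h$ satisfies $(1-\mu(\sigma))\tilde h(\sigma)=0$, so that after meromorphic continuation of $\tilde h$ into the domain of $\mu$ its only singularities lie at points of $\Sigma$, a pole of order $k$ at $\sigma$ contributing precisely the residues $x^\sigma,\dots,x^\sigma\log^{k-1}x$; summing over the—possibly infinitely many—elements of $\Sigma$ yields the stated linear combinations.

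To produce the distinguished solution $b_\eps$ I would invert the Mellin transform along a well-chosen contour. Since $f_3\in C^1$, one integration by parts shows $\mu(\sigma)\to0$ uniformly as $|\im\sigma|\to\infty$ on vertical lines; hence the zeros of $1-\mu$ are confined to a bounded region and $\Sigma$ is discrete, so I may fix $\delta\in(0,\frac12)$ small enough that the line $\re\sigma=-2\delta$ contains no point of $\Sigma$. (The standing hypotheses on $f$, transported from $f_1$ to $f_3$ through $f_2=-(xf)'$ and $f_3=-x^{-2}\int_0^x tf_2$, additionally place the nontrivial spectrum in $\re\sigma<0$, which is what later selects the decaying physical solution.) Define $b_\eps(x)=\frac1{2\pi i}\int_{\re\sigma=-2\delta}\frac{\tilde\eps(\sigma)}{1-\mu(\sigma)}\,x^\sigma\,d\sigma$; on this line $|x^\sigma|=x^{-2\delta}$, and the bound $|\eps(x)|\le C^{(18)}x^{-2\delta}$, together with the regularity of $\eps$ (to gain decay of $\tilde\eps$ in $\im\sigma$), makes the integrand absolutely integrable, so that $|b_\eps(x)|\le C^{(19)}x^{-2\delta}$ and $b_\eps$ solves \eqref{eq:continuous} by construction. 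Finally, for an arbitrary solution $b$ the difference $b-b_\eps$ solves the homogeneous equation, so by the classification above it is a linear combination of the $x^\sigma\log^j x$, which is the asserted representation.

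The main obstacle, I expect, is making the transform step rigorous. A general solution need not decay, and $b$ is only defined after the truncation $b\equiv0$ on $[0,1)$, so its Mellin transform exists only in a half-plane or as a distribution; justifying the analytic continuation, pinning down the location and multiplicities of the zeros of $1-\mu$, and—above all—proving the completeness assertion that no homogeneous solution escapes the system $\{x^\sigma\log^j x\}$ is the delicate part. One must also control the convergence of the possibly infinite sum over $\Sigma$, whose elements may accumulate toward the boundary $\re\sigma=-1$ of analyticity of $\mu$; here the decay of $\mu$ along verticals and a careful count of the roots of $1-\mu$ form the technical heart of the argument.
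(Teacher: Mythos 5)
Your proposal is correct and is essentially the paper's own argument written in Mellin-transform language: the paper proves the Proposition via the substitution $x=e^\xi$ and the Fourier transform, explicitly remarking that the proof ``can be carried out in a simpler way using the Mellin Transform,'' which is precisely what you do. Your steps --- dividing by $1-\mu(\sigma)$, inverting along a vertical contour avoiding $\Sigma$ to produce the special solution $b_\eps$ with the bound $C^{(19)}x^{-2\delta}$, and attributing the terms $x^\sigma\log^j x$ to the poles of $1/(1-\mu(\sigma))$ with their multiplicities --- correspond one-to-one to the paper's contour-and-residues argument (your line $\re\sigma=-2\delta$ versus the paper's indented contour differs only by residues that are absorbed into the homogeneous part), and your direct verification that $T(x^\sigma\log^j x)=x^\sigma\log^j x$ is a nice supplementary check the paper leaves implicit.
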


\begin{proof}
This proof can be carried out in a simpler way using the Mellin Transform, but we shall stick to the Fourier Transform as it is more common. To this end we set $x=e^\xi$, $y=e^{-\eta}$, $B(\xi)=b(e^\xi)$, $F(\eta)=-f_3(e^{-\eta})e^{-\eta}$, $E(\xi)=\eps(e^\xi)$. We also extend $f_3$ to be zero on $(1,\infty)$. We get $$B(\xi)=\int_{-\infty}^\infty B(\xi-\eta)F(\eta)d\eta+E(\xi).$$ Taking Fourier Transform of this equation yields $$\hat B(\alpha)=\frac{\hat E(\alpha)}{1-\hat F(\alpha)}.$$ Of course we only require that these are equal as distributions. Now $\hat F(-i\alpha)=\displaystyle\int_0^1 t^\alpha f_3(t)dt$, so we need to look where it attains the value one. It is precisely on the set $i\Sigma$. To invert $\hat B$, we shall integrate along a countour that goes around points in $i\Sigma$ (one can easily see them to be isolated) and stays on the real line otherwise. The integral away from the poles will give $b_\eps(x)$ and can be bounded as follows. We know that $|E(\xi)|\le Ce^{-2\delta\xi}$ (hence the Fourier Transform is analytic in a strip centered at the real axis) and that $\frac1{1-\hat F(\alpha)}$ is meromorphic. Thus the decay rate for $\left(\frac{\hat E(\alpha)}{1-\hat F(\alpha)}\right)^\vee(\xi)=b_\eps(e^\xi)$ is the same as that for $E(x).$ Integrals near poles evaluate to residues at those poles, up to constants. For a simple pole at $\alpha'$ the residue is $e^{i\xi \alpha'}$. Residues at higher order poles are obtained in the same way. The result is immediate once we return to the original variables. 
\end{proof}

The next proposition will allow us to better understand the structure of $\Sigma$. 

\begin{prop}
With notation as above, the set $$\Sigma\cap \{\sigma\in \mathbf C\colon\re \sigma > \sigma_0\}$$ is finite for each $\sigma_0>-1$. 
\end{prop}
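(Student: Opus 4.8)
The plan is to regard $\Sigma$ as the level set $\{G=1\}$ of the holomorphic function $G(\sigma)=\int_0^1 t^\sigma f_3(t)\,dt$, and to show that $G-1$ is a nonzero holomorphic function whose zeros, once we restrict to a closed half-plane $\{\re\sigma\ge\sigma_1\}$ with $\sigma_1>-1$, are confined to a compact set. Finiteness will then follow from the fact that a nonzero holomorphic function has only isolated zeros.

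First I would set up the analytic framework. Since $f\in C^2$ and $f(1)=0$, the function $f_2=-(xf)'$ is $C^1$ on $[0,1]$, and a short computation shows that $f_3(x)=-x^{-2}\int_0^x tf_2(t)\,dt$ extends to a $C^1$ function on all of $[0,1]$: near $x=0$ one has $\int_0^x tf_2(t)\,dt=\tfrac12 f_2(0)x^2+O(x^3)$, so $f_3(0)=-\tfrac12 f_2(0)$, and the potentially singular $1/x$ contributions to $f_3'$ cancel, leaving a finite limit at the origin. Because $f_3$ is bounded on $[0,1]$, both $G(\sigma)$ and $G'(\sigma)=\int_0^1 t^\sigma(\log t)f_3(t)\,dt$ converge absolutely for $\re\sigma>-1$, so $G$ is holomorphic on the half-plane $\{\re\sigma>-1\}$.

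The quantitative heart of the argument is a decay estimate obtained by a single integration by parts:
$$G(\sigma)=\frac{f_3(1)}{\sigma+1}-\frac1{\sigma+1}\int_0^1 t^{\sigma+1}f_3'(t)\,dt,$$
where the boundary term at $0$ vanishes because $\re(\sigma+1)>0$. For $\re\sigma\ge\sigma_1>-1$ the remaining integral is bounded by $\|f_3'\|_\infty/(\sigma_1+2)$, so
$$|G(\sigma)|\le\frac{C}{|\sigma+1|},\qquad \re\sigma\ge\sigma_1,$$
with $C$ depending only on $f_3$ and $\sigma_1$. In particular $G(\sigma)\to0$ as $\re\sigma\to+\infty$, which shows $G\not\equiv1$, so $G-1$ is a nonzero holomorphic function on $\{\re\sigma>-1\}$.

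To conclude, fix $\sigma_1\in(-1,\sigma_0)$. The displayed estimate forces every $\sigma\in\Sigma$ with $\re\sigma\ge\sigma_1$ to satisfy $|\sigma+1|\le C$, so $\Sigma\cap\{\re\sigma\ge\sigma_1\}$ lies in the compact set $K=\{\re\sigma\ge\sigma_1\}\cap\{|\sigma+1|\le C\}$, on a neighborhood of which $G-1$ is holomorphic and not identically zero. Hence $G-1$ has only finitely many zeros in $K$, and since $\{\re\sigma>\sigma_0\}\subset\{\re\sigma\ge\sigma_1\}$, the set $\Sigma\cap\{\re\sigma>\sigma_0\}$ is finite. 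The step I expect to require the most care is the regularity bookkeeping for $f_3$ near the origin, since the estimate rests on $f_3'$ being bounded there; verifying the cancellation of the $1/x$ singularity (using $f\in C^2$) is the one nonroutine point, after which the complex-analytic part is standard.
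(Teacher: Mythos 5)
Your proof is correct and rests on essentially the same mechanism as the paper's: a single integration by parts, valid because $\re\sigma>-1$ kills the boundary term at $t=0$ and because $f\in C^2$ with $f(1)=0$ makes $f_3'$ bounded, which forces decay of $F_3(\sigma)=\int_0^1 t^\sigma f_3(t)\,dt$, after which finiteness follows from the isolated-zeros principle. Your uniform modulus bound $|G(\sigma)|\le C/|\sigma+1|$ is in fact slightly more complete than the paper's argument, which only shows that $\re F_3$ tends to $0$ uniformly in $\mu=\re\sigma$ as $\nu=\im\sigma\to\infty$ and leaves implicit both the large-$\mu$ regime and the compactness/isolation step that you (rightly) spell out, along with the regularity of $f_3$ at the origin.
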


\begin{proof}
Let us look only at the real part; in this calculation $\sigma=\mu+i\nu$. We have $$\int_0^1 \cos(\nu \log t)t^\mu f_3(t)dt=-\frac1{\nu}\int_0^1 \sin(\nu\log t)\frac{d}{dt}(f_3(t)t^{\mu+1})dt.$$ It is clear that the last expression tends to zero uniformly in $\mu$ as $\nu\to\infty$ provided $\mu>\sigma_0>-1$. 
\end{proof}

This Proposition allows us to study $\Sigma$ more carefully. Since $$f_3(t)=f_1(t)-\frac1{t^2}\int_0^t xf_1(x)dx\:\:\mbox{and}\:\int_0^1f_1(t)dt=2,$$ we always have $0\in\Sigma$. Set $F_1(\sigma)=\displaystyle\int_0^1 t^\sigma f_1(t)dt$ and $F_3(\sigma)=\displaystyle\int_0^1 t^\sigma f_3(t)dt$. Then $$\frac{\sigma}{\sigma-1}F_1(\sigma)-\frac1{\sigma-1}=F_3(\sigma)$$ for $\sigma\ne 1.$ This means that it suffices to look for solutions to $F_1(\sigma)=1$. It is easy to see that $F_1(\sigma)<1$ when $\re \sigma>1$ even without Assumption 3. It is also clear that $F_3(1)\ne 1$. Therefore Assumption 3 effectively says that there are no solutions to $F_3(\sigma)=1$ in the strip $0\le \re\sigma\le 1$ with the exception of $\sigma=0$. Thus $\sigma=0$ is the solution with the largest real part. However, this solution  is extraneous to our problem because it implies that $\frac{a_p-a_{p-1}}{a_{p-1}}\sim C/p^2$ and thus $$\left(1-\frac{a_\infty-a_p}{a_p}\right)^p\to e^{-C}.$$ This is only possible when $C=0$, so this solution does not work in our situation. To this end we define $\Sigma'=\Sigma\setminus\{0\}.$

Suppose $\Sigma'$ is nonempty and let $\sigma_1=\max\{\re \sigma\colon \sigma\in\Sigma'\}<0$; it exists by Proposition 1. Then choose $\delta$ so that $\sigma_1<-\delta<0$. Then the slowest decaying solution $b_p$ behaves at worst like $p^{\sigma_1}$ and $$\left(1-\frac{a_\infty-a_p}{a_p}\right)^p\to 1. $$ This means that $a_\infty$ is the desired separating solution. If $\Sigma'$ is empty, define $\delta=\frac14$ and $\sigma_1=-\frac12$. Then the same result holds. It is clear that $A$ and $B$ remain bounded in either case. 

\section{Numerical Analysis}

\begin{figure}[p]
\begin{center}

\includegraphics[width=0.75\textwidth]{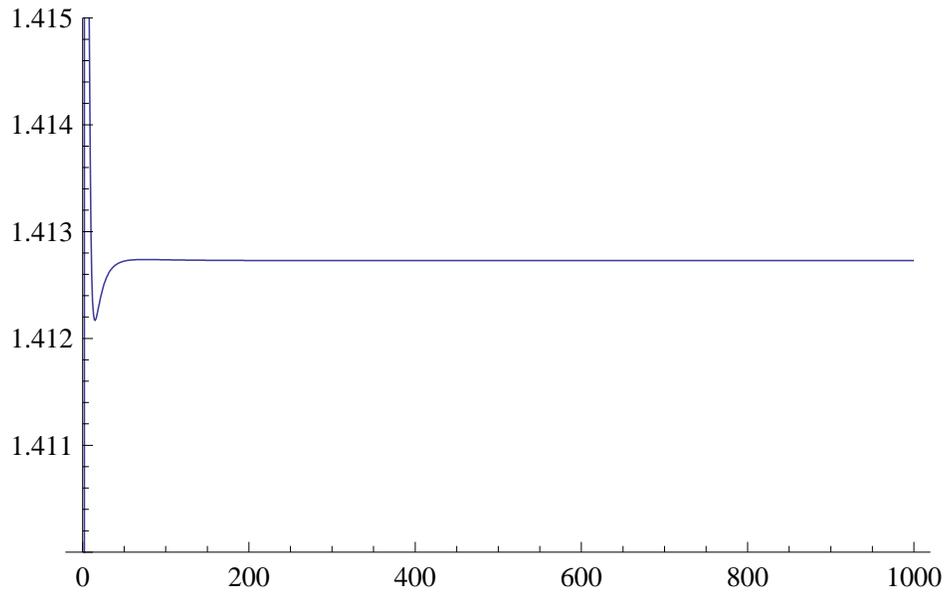}
\caption{The sequence $a_p$ for $f(x)=6x^2-10x+4$.}
\end{center}
\end{figure}

\begin{figure}[p]
\begin{center}

\includegraphics[width=0.75\textwidth]{b.eps}
\caption{The sequence $pb_p$ for $f(x)=6x^2-10x+4$.}
\end{center}
\end{figure}

\begin{figure}[p]
\begin{center}

\includegraphics[width=0.75\textwidth]{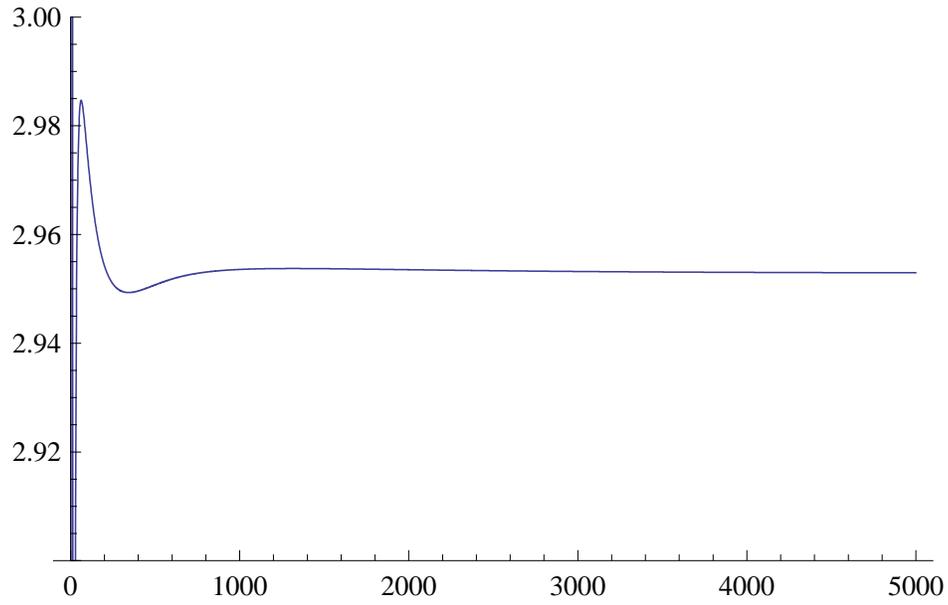}
\caption{The sequence $a_p$ for $f(x)=9x^8$.}
\end{center}
\end{figure}

\begin{figure}[p]
\begin{center}

\includegraphics[width=0.75\textwidth]{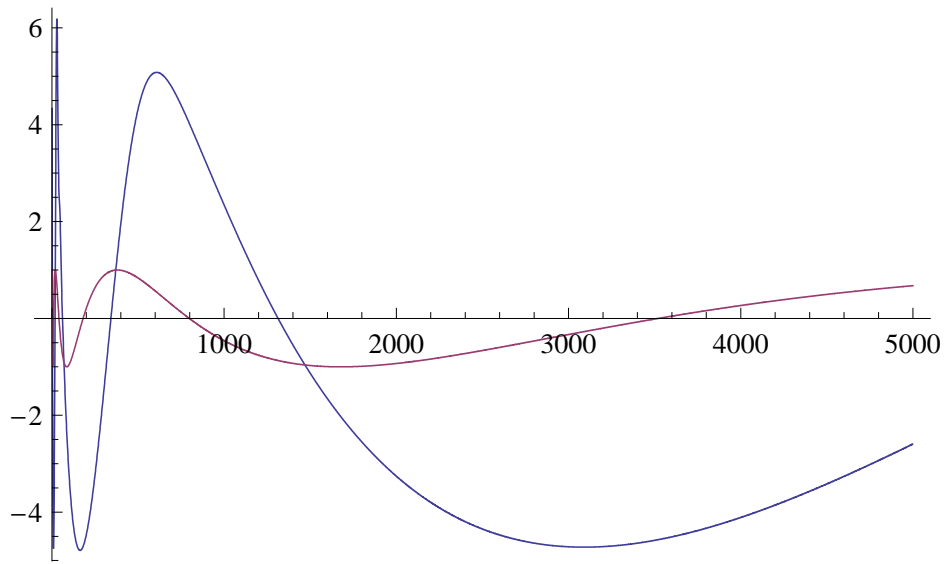}
\caption{The sequences $p^{0.234067}b_p$ and $\cos(2.11581\log p)$ for $f(x)=9x^8$.}
\end{center}
\end{figure}

While the Main Theorem is true for $f$ satisfying Assumption 3, numerical calculations suggest that it is true even without this assumption. First we show how  asymptotics work for a function satisfying this assumption, say $f(x)=6x^2-10x+4$. From Figure 2 we see that $pb_p\to0$, which is expected as $\Sigma'=\varnothing$ for this function. It is reasonable to infer from Figure 1 that a separating solution exists and that the starting value is approximately $1.412729$. 

Next we consider the function $f(x)=9x^8$. It has $\Sigma'\approx\{-0.234067\pm 2.11581i\}$. 
Figure 4 shows $p^{-\sigma_1}b_p$ and $\cos (\im\sigma\log p)$ (the smaller graph is the cosine). We see that the consecutive extrema of the rescaled $b_p$ are at about the same absolute heights. In addition, we note that zeros of the two functions alternate. Therefore, it is plausible that $A\cos(\im\sigma\log p+B)$ will coincide with our function for  sufficiently large $p$. Figure 3 suggests that a separating solution exists and that $a_\infty\approx2.95072$. 

\begin{figure}[p]
\begin{center}

\includegraphics[width=0.75\textwidth]{e.eps}
\caption{The sequence $a_p$ for $f(x)=13x^{12}$.}
\end{center}
\end{figure}

\begin{figure}[p]
\begin{center}

\includegraphics[width=0.75\textwidth]{f.eps}
\caption{The sequence $b_p$ for $f(x)=13x^{12}$.}
\end{center}
\end{figure}

Finally we look at $f(x)=13x^{12}$. It has $\Sigma'\approx\{0.105896\pm 1.97567i\}$, and our theorem does not apply in this situation. Nevertheless numerics show that $a_\infty$ exists, and its value is approximately $3.688371$ (see Figure 5). The sequence $b_p$ in Figure 6 doesn't seem to follow the asymptotic prescribed by $\sigma_1$. It is unclear how to pick $\delta$ for such a function since the error term $\eps_p^{(12)}$ does not have good decay when $\delta<0$. Apparently Assumption 3 is not necessary for the Main Theorem to hold, but in this case the structure of solutions to \eqref{eq:finalrecurrent} is unclear. 



\bibliographystyle{alpha}

\markboth{}{}

\bibliography{bibliography}

\end{document}